\newtheorem{theorem}[equation]{Theorem}
\newtheorem{lemma}[equation]{Lemma}
\newtheorem{proposition}[equation]{Proposition}
\newcommand{\Mod}[1]{\ (\mathrm{mod}\ #1)}
\def\F{\mathbb{F}}
\begin{document}

\title{Algebraic Properties of a Hypergraph Lifting Map}

\author[M. Budden]{Mark Budden}
\address{Department of Mathematics and Computer Science \\
Western Carolina University \\
Cullowhee, NC 28723 USA}
\email{mrbudden@email.wcu.edu}

\author[J. Hiller]{Josh Hiller}
\address{Department of Mathematics and Computer Science \\ 
Adelphi University \\
Garden City, NY 11530-0701}
\email{johiller@adelphi.edu}

\author[T. Meek]{Tommy Meek}
\address{Department of Mathematics and Computer Science \\
Western Carolina University \\
Cullowhee, NC 28723 USA}
\email{thmeek1@catamount.wcu.edu}

\author[A. Penland]{Andrew Penland}
\address{Department of Mathematics and Computer Science \\
Western Carolina University \\
Cullowhee, NC 28723 USA}
\email{adpenland@email.wcu.edu}

\subjclass[2010]{Primary  05C65, 05C55; Secondary 05D10}
\keywords{Ramsey numbers, hypergraphs, edge colorings, linear transformation}

\begin{abstract}
Recent work in hypergraph Ramsey theory has involved the introduction of a ``lifting map'' that associates a certain $3$-uniform hypergraph to a given graph, bounding cliques in a predictable way.  In this paper, we interpret the lifting map as a linear transformation. This interpretation allows us to use algebraic techniques to prove several structural properties of the lifting map, culminating in new lower bounds for certain $3$-uniform hypergraph Ramsey numbers. \end{abstract}

\maketitle

\section{Introduction}

In \cite{BHLS}, a lifting map $\varphi :\mathcal{G}_2\longrightarrow \mathcal{G}_3$ was described that assigned to each graph a unique $3$-uniform hypergraph.  Here, $\mathcal{G}_2$ denotes the set of all graphs of order at least $3$ and $\mathcal{G}_3$ is the set of all $3$-uniform hypergraphs of order at least $3$.  Both a graph $G$ and its image $\varphi (G)$ share the same vertex set, and an unordered $3$-tuple $abc$ forms a hyperedge in $\varphi (G)$ if and only if the subgraph of $G$ induced by $\{a, b, c\}$ contains an odd number of edges. 

The lifting $\varphi$ was shown to preserve complements (i.e., $\varphi (\overline{G})=\overline{\varphi (G)}$) and the way in which $\varphi$ lifted to complete graphs was analyzed.  Specifically, it was shown that if $\varphi (G)$ contained a complete hypergraph with vertices $x_1, x_2, \dots , x_n$, then the subgraph of $G$ induced by $\{ x_1, x_2, \dots , x_n\}$ is the disjoint union of at most $2$ complete graphs (including the possibility that it is complete).  These properties were then used to provide new lower bounds for certain $3$-uniform hypergraph Ramsey numbers.  

In \cite{BR}, a generalization of $\varphi$ was described that allowed graphs to be lifted to $r$-uniform hypergraphs.   In this variation, denoted $\varphi ^{(r)}$, a hyperedge $x_1x_2\cdots x_r$ is formed in the image of $\varphi ^{(r)}$ if and only if the subgraph of $G$ induced by $\{x_1, x_2, \dots , x_r\}$ is the disjoint union of at most $r-1$ complete graphs.  Like $\varphi$, it was shown that $\varphi ^{(r)}$ lifted to complete subhypergraphs in a predictable way, but unfortunately, complements were no longer preserved when $r>3$ making it ineffective as a tool in Ramsey theory.  
Rather, \cite{BR} included an application involving Tur\'an numbers.  

Since $\varphi$ preserved complements, it could be interpreted as describing a way of lifting $2$-colorings of the edges of the complete graph of order $n\ge 3$ to $2$-colorings of the hyperedges of the complete $3$-uniform hypergraph of order $n$.  Generalizing this interpretation to $3$-colorings was considered in \cite{BHP}, but the problem of deciding how a rainbow triangle should lift led to a focus on Gallai colorings (those lacking rainbow triangles).  The present paper sets out to avoid this restriction by recognizing the lifting as a linear transformation between certain vector spaces over a finite field.  This generalization allows one to extend the lifting map to more than two colors and to consider it as a map between arbitrary uniformities.  It also provides an algebraic framework to the lifting map, providing insight into its structure via standard algebraic techniques.

In Section \ref{lineartrans}, we construct vector spaces of hypergraph edge colorings over finite fields and interpret the lifting map as a linear transformation between such vector spaces.  A few general results are proved before focusing our attention on the theory when the field of scalars is $\F _2$ (the finite field of order $2$) in Section \ref{twofield}.  Finally, in Section \ref{Ramtheory}, we consider the applications to Ramsey theory that follow from our new algebraic description of the lifting map.  We are able to prove two new lower bounds for certain $3$-color and $5$-color $3$-uniform hypergraph Ramsey numbers.

\section{The Lifting Map as a Linear Transformation}\label{lineartrans}

In order to establish the lifting map as a linear transformation, we must first formalize the terminology and background surrounding the objects to be studied.  An $r$-uniform hypergraph $H=(V(H), E(H))$ consists of a nonempty set of vertices $V(H)$ and a set of hyperedges $E(H)$, whose elements are different $r$-tuples of distinct vertices from $V(H)$.  The complete $r$-uniform hypergraph of order $n$ is denoted by $K_n^{(r)}$ and consists of $n$ vertices, every $r$-element subset of which forms a hyperedge.  When $r=2$, we simplify the notation $K_n^{(2)}$ and just write $K_n$.

Let $\F_q$ be the finite field of order $q=p^m$, where $p$ is a prime number and $m\ge 1$ is an integer.  An $\F_q$-hyperedge coloring of an $r$-uniform hypergraph $H$ is a map $f: E(H)\longrightarrow \F_q$.  Denote the set of all $\F_q$-hyperedge colorings of $K_n^{(r)}$ by  $\mathcal{H}_n^{(r)}(\F_q)$ and observe that it forms a vector space over $\F_q$ under the operations $$(f+g)(e)=f(e)+g(e) \quad \mbox{and} \quad (\alpha f)(e)=\alpha f(e),$$ where $e\in E(K_n^{(r)})$, $\alpha \in \F_q$, and $f,g\in \mathcal{H}_n^{(r)}(\F_q)$. A basis for $\mathcal{H}_n^{(r)}(\F_q)$ can be formed using the $\F_q$-hyperedge colorings $$f_
{e'}(e)=\left\{\begin{array}{ll} 1 & \mbox{if $e=e'$} \\ 0 & \mbox{if $e\ne e'$,} \end{array}\right.$$ where $e' \in E(K_n^{(r)})$. It follows that $dim_{\F_q}(\mathcal{H}_n^{(r)}(\F_q))={n\choose r}$.

If $T$ is a set, then  denote by $T^r$ the set of all $r$-element subsets of $T$.  For $2\le s<r\le n$, define the lifting $\Psi _{q,n}^{(s,r)}: \mathcal{H}_n^{(s)}(\F_q)\longrightarrow \mathcal{H}_n^{(r)}(\F_q)$ by $$(\Psi _{q,n}^{(s,r)} f)(e)=\mathop{\sum}\limits_{e'\in e^{s}} f(e'),$$ where $f\in \mathcal{H}_n^{(s)}(\F_q)$ and $e\in E(K_n^{(r)})$.  We leave it as an exercise for the reader to check that $\Psi_{q,n}^{(s,r)}$ is a linear transformation and $\Psi_{2,n}^{(2,3)}$ corresponds with the lifting described in \cite{BHLS}. Realizing this map as a linear transformation elucidates some of its properties, the first of which involves the time required to determine if a given hyperedge coloring is in the image of such a map.

\begin{proposition}\label{p:algorithm-exists}
For $g \in \mathcal{H}_n^{(r)}(\F_q)$, there exists a polynomial time algorithm to determine whether or not $g \in Im (\Psi_{q,n}^{(s,r)})$. 
\end{proposition}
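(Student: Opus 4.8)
The plan is to reduce the membership question to testing consistency of a linear system over $\F_q$, and then to invoke the standard fact that Gaussian elimination runs in polynomial time. Since $\Psi_{q,n}^{(s,r)}$ is linear and the excerpt already supplies explicit bases $\{f_{e'}\}$ for the domain $\mathcal{H}_n^{(s)}(\F_q)$ and $\{f_e\}$ for the codomain $\mathcal{H}_n^{(r)}(\F_q)$, the first step is to write down the matrix $M$ of $\Psi_{q,n}^{(s,r)}$ with respect to these bases. From the defining formula $(\Psi_{q,n}^{(s,r)} f)(e)=\sum_{e'\in e^{s}} f(e')$ one reads off that the $(e,e')$ entry of $M$ equals $1$ if $e'\subseteq e$ and $0$ otherwise; that is, $M$ is the $\binom{n}{r}\times\binom{n}{s}$ inclusion matrix of $s$-subsets versus $r$-subsets of an $n$-element set. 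In particular every entry of $M$ is $0$ or $1$, and $M$ can be written down in time polynomial in $n$ when $s$ and $r$ are fixed.

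The second step is the observation that, identifying $g$ with its coordinate vector $\mathbf{g}\in\F_q^{\binom{n}{r}}$, we have $g\in \Img(\Psi_{q,n}^{(s,r)})$ if and only if the linear system $M\mathbf{x}=\mathbf{g}$ is consistent. This can be decided by forming the augmented matrix $[M\mid\mathbf{g}]$ and row-reducing it over $\F_q$: the system has a solution precisely when the reduced form contains no row of the shape $(0,\dots,0\mid c)$ with $c\neq 0$. Row reduction on a matrix with $\binom{n}{r}$ rows and $\binom{n}{s}+1$ columns uses $O\!\left(\binom{n}{r}\binom{n}{s}\min\{\binom{n}{r},\binom{n}{s}\}\right)$ arithmetic operations in $\F_q$, each costing time polynomial in $\log q$; since $\binom{n}{r}$ and $\binom{n}{s}$ are polynomial in $n$ for fixed $s,r$, the whole procedure runs in time polynomial in $n$ and $\log q$, hence polynomial in the size of the input.

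There is essentially no hard step here: the content of the proposition is simply that the vector spaces in question have dimension polynomial in $n$, and that membership in the image of a linear map is a linear-algebraic decision problem dispatched by elimination. The one point deserving a moment's care is the model of computation — one should fix whether $s$, $r$, and $q$ are treated as constants or as part of the input, and confirm that the bound stays polynomial in each relevant parameter — but the conclusion is the same under all these readings. If one anticipates many queries for the same $n,s,r,q$, it is worth remarking that a row-echelon form of $M$ (together with a record of the row operations used) can be precomputed once, after which each subsequent membership test for a given $g$ reduces to a matrix–vector multiplication and a scan, costing only $O\!\left(\binom{n}{r}^2\right)$ operations.
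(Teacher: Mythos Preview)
Your proof is correct and follows exactly the same approach as the paper, which simply says ``Apply Gaussian Elimination to solve the system $\Psi_{q,n}^{(s,r)}(f)=g$.'' Your version supplies the details the paper omits---the explicit inclusion matrix, the consistency criterion via the augmented matrix, and the complexity accounting---so it is a more complete write-up of the same idea.
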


\begin{proof}
Apply Gaussian Elimination to solve the system $\Psi_{q,n} ^{(s,r)}( f) = g$. 
\end{proof}

\begin{theorem} \label{sumfield}
Let $n\ge r>s\ge 2$, $g\in Im (\Psi_{q,n}^{(s,r)})$, and suppose that $q| {n-s \choose r-s}$.  Then   $$\mathop{\sum}_{e\in E(K_n^{(r)})} g(e)=0_{\F_q}.$$
\end{theorem}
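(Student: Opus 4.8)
The plan is to exploit the defining formula for $\Psi_{q,n}^{(s,r)}$ together with a double-counting argument. Since $g \in \Img (\Psi_{q,n}^{(s,r)})$, fix some $f \in \mathcal{H}_n^{(s)}(\F_q)$ with $g = \Psi_{q,n}^{(s,r)} f$. Then
$$\sum_{e \in E(K_n^{(r)})} g(e) = \sum_{e \in E(K_n^{(r)})} \sum_{e' \in e^s} f(e'),$$
and the task becomes to interchange the two summations.

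Next I would count, for a fixed $s$-element subset $e'$ of $V(K_n^{(r)})$, how many $r$-element subsets $e$ satisfy $e' \in e^s$, i.e.\ $e' \subseteq e$. Completing $e'$ to such an $e$ amounts to choosing the remaining $r-s$ vertices from the $n-s$ vertices lying outside $e'$, so there are exactly $\binom{n-s}{r-s}$ of them, and crucially this count does not depend on $e'$. Swapping the order of summation therefore yields
$$\sum_{e \in E(K_n^{(r)})} g(e) = \binom{n-s}{r-s} \sum_{e' \in E(K_n^{(s)})} f(e'),$$
where the binomial coefficient is understood as the scalar $\binom{n-s}{r-s} \cdot 1_{\F_q}$.

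Finally, since $\F_q$ has characteristic $p$ and the hypothesis $q = p^m \mid \binom{n-s}{r-s}$ forces $p \mid \binom{n-s}{r-s}$, the scalar $\binom{n-s}{r-s}\cdot 1_{\F_q}$ equals $0_{\F_q}$, so the whole expression vanishes irrespective of the value of $\sum_{e'} f(e')$. I do not anticipate a genuine obstacle here; the only points demanding a little care are the bookkeeping in the double count (verifying that the multiplicity $\binom{n-s}{r-s}$ is uniform over all $s$-subsets $e'$) and the observation that divisibility by $q$ in $\mathbb{Z}$ descends to annihilation in $\F_q$ through the characteristic.
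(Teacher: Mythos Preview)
Your argument is correct and follows essentially the same route as the paper's own proof: expand $g$ via $f$, interchange the double sum by observing that each $s$-edge $e'$ lies in exactly $\binom{n-s}{r-s}$ of the $r$-edges, and then use the divisibility hypothesis to kill the resulting scalar in $\F_q$. Your added remark that $q\mid\binom{n-s}{r-s}$ forces $p\mid\binom{n-s}{r-s}$, hence annihilation in characteristic $p$, is a welcome clarification of a step the paper leaves implicit.
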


\begin{proof}
Let $g=\Psi_{q,n}^{(s,r)}f$.
By definition, the sum in Theorem \ref{sumfield} becomes $$\mathop{\sum}_{e\in E(K_n^{(r)})} g(e)=\mathop{\sum}_{e\in E(K_n^{(r)})} \mathop{\sum}_{e'\in e^{s}} f(e').$$  Within this sum, $f(e')$ occurs ${n-s \choose r-s}$ times, corresponding to the number of hyperedges $e\in E(K_n^{(r)})$ that contain $e'$.  The assumption $q| {n-s \choose r-s}$ implies that the sum is $0_{\F_q}$.
\end{proof}

In the case where $s=r-1$, we obtain the following theorem.

\begin{theorem}\label{minnumber}
Let $r\ge 3$ and assume that $\Psi^{(r-1,r)}_{q,n}f=\Psi^{(r-1,r)}_{q,n}g$. If $f\ne g$, then $f$ and $g$ differ by at least $n-r+2$ hyperedge colors.
\end{theorem}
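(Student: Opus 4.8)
The plan is to reduce the statement to a minimum-weight bound on the kernel of $\Psi^{(r-1,r)}_{q,n}$. Since $\Psi^{(r-1,r)}_{q,n}$ is linear, the hypothesis $\Psi^{(r-1,r)}_{q,n}f = \Psi^{(r-1,r)}_{q,n}g$ is equivalent to $h := f - g$ lying in the kernel of $\Psi^{(r-1,r)}_{q,n}$, and the number of $(r-1)$-hyperedge colors on which $f$ and $g$ differ is exactly the number of $(r-1)$-subsets $e'$ with $h(e') \neq 0_{\F_q}$ (the \emph{weight} of $h$). So it suffices to show that every nonzero $h$ in the kernel has weight at least $n - r + 2$.

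First I would unpack the kernel condition combinatorially. An $r$-subset $e$ has exactly $r$ subsets of size $r-1$, each obtained by deleting a single vertex, so $\Psi^{(r-1,r)}_{q,n}h = 0$ says that for every $r$-subset $e = \{x_1, \dots, x_r\}$ we have $\sum_{i=1}^{r} h(e \setminus \{x_i\}) = 0_{\F_q}$. Assuming $h \neq 0$, I would then fix an $(r-1)$-subset $e_0 = \{y_1, \dots, y_{r-1}\}$ with $h(e_0) \neq 0_{\F_q}$, and for each of the $n - (r-1)$ vertices $v \notin e_0$ apply this relation to the $r$-subset $e_0 \cup \{v\}$. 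One of the $r$ deletions returns $e_0$ itself, so the relation reads $h(e_0) + \sum_{i=1}^{r-1} h((e_0 \setminus \{y_i\}) \cup \{v\}) = 0_{\F_q}$. Because $h(e_0) \neq 0_{\F_q}$, the remaining sum is nonzero, which forces at least one set $S_v := (e_0 \setminus \{y_i\}) \cup \{v\}$ with $h(S_v) \neq 0_{\F_q}$.

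The bound then comes from a counting step, and the key point to verify is that all of these witnesses are distinct. Each $S_v$ contains $v$ as its unique vertex lying outside $e_0$; hence $S_v \neq e_0$, and for $v \neq v'$ we have $v' \notin S_v$, so $S_v \neq S_{v'}$. This produces $n - (r-1)$ distinct nonzero coordinates among the $S_v$, none equal to $e_0$, so together with $e_0$ the weight of $h$ is at least $1 + (n - r + 1) = n - r + 2$, as required.

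I expect the only genuine subtlety to be the disjointness argument in the final step: one must check that the forced nonzero coordinates arising from different added vertices $v$ cannot coincide, and that none of them coincides with $e_0$. Both follow cleanly once one observes that $v$ is the distinguishing vertex of $S_v$. Everything else is a direct translation of the linearity of $\Psi^{(r-1,r)}_{q,n}$ and the defining summation, so the argument should be short.
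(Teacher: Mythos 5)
Your proof is correct and takes essentially the same approach as the paper's: both arguments fix a single $(r-1)$-set on which the colorings differ, use each of the $n-r+1$ ways of extending it by one vertex to an $r$-set to force at least one additional differing $(r-1)$-set containing the added vertex, and conclude by noting these witnesses are pairwise distinct and distinct from the original. Your reformulation via the kernel of the linear map and the weight of $h=f-g$ is just a cleaner packaging of the linearity that the paper uses implicitly in its ``retaining the colors'' step.
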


\begin{proof}
Assume that $\Psi^{(r-1,r)}_{q,n}f=\Psi^{(r-1,r)}_{q,n}g$ and $f\ne g$.  Then some hyperedge $x_1 x_2 \cdots x_{r-1}$ in $K_n^{(r-1)}$ receives a different color under $g$ than it does under $f$.  The hyperedge $x_1 x_2 \cdots x_{r-1}$ is contained in exactly $n-(r-1)$ $r$-tuples, each of which contains a distinct vertex from the set $$V(K_n^{(r)})-\{x_1, x_2, \dots , x_{r-1}\}=\{y_1, y_2, \dots , y_{n-(r-1)}\}.$$  Retaining the colors of these $r$-tuples under $\Psi^{(r-1,r)}_{q,n}$ requires at least $n-(r-1)$ additional hyperedges in $K_n^{(r-1)}$ (each of which includes a single element from $\{y_1, y_2, \dots , y_{n-(r-1)}\}$ and some selection of $r-2$ vertices from $\{x_1, x_2, \dots ,x_{r-1}\}$) be colored differently under $g$ than under $f$.  Hence, $f$ and $g$ differ by at least $n-r+2$ hyperedge colors.
\end{proof}

To see that this theorem is optimal, consider the case of $\Psi^{(2,3)}_{2,5}$.   By Theorem 4 of \cite{BHLS}, both a $1_{\F_2}$-colored $K_1\dot{\cup} K_4$ and a $1_{\F_2}$-colored $K_2\dot{\cup} K_3$ map to a $1_{\F_2}$-colored $K_5^{(3)}$ and can be shown to differ by exactly $4$ edge colors, the minimum number implied by Theorem \ref{minnumber}.

\section{The Case of the Finite Field $\F_2$}\label{twofield}

When restricting to the case $q=2$, we have the ability to discuss complements of hyperedge colorings.  The preservation of complements is exactly the property that allowed the lifting map to be applied to Ramsey theory in \cite{BHLS}. 
Let $H$ be an $r$-uniform hypergraph and let $f: E(H)\longrightarrow \F_2$ be an $\F_2$-hyperedge coloring. Define the {\it complement} of $f$ to be the $\F_2$-hyperedge coloring $\overline{f}:E(H)\longrightarrow \F_2$ such that for all $e \in E(H)$,
$$\overline{f}(e)=\left\{\begin{array}{ll} 0_{\F_2} & \mbox{if $f(e)=1_{\F_2}$} \\ 1_{\F_2} & \mbox{if $f(e)=0_{\F_2}$.} \end{array}\right.$$

\begin{theorem}
Let $f\in \mathcal{H}_n^{(s)}(\F_2)$. Then
$$\Psi _{2,n}^{(s,r)} \overline{f}=\left\{\begin{array}{ll} \overline{\Psi _{2,n}^{(s,r)}f } & \mbox{if ${r\choose s}$ is odd} \\ 
\Psi _{2,n}^{(s,r)} f & \mbox{if ${r\choose s}$ is even.}\end{array}\right.$$
\label{compthrm}\end{theorem}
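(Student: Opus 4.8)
The plan is to compute $\Psi_{2,n}^{(s,r)}\overline{f}$ directly from the definition and compare it termwise to $\Psi_{2,n}^{(s,r)}f$. First I would observe that the complement operation can be written additively: for any $\F_2$-hyperedge coloring $h$ on $K_n^{(s)}$, we have $\overline{h} = h + \mathbf{1}$, where $\mathbf{1}$ denotes the all-ones coloring (the constant function $e' \mapsto 1_{\F_2}$). Since $\Psi_{2,n}^{(s,r)}$ is linear, this gives $\Psi_{2,n}^{(s,r)}\overline{f} = \Psi_{2,n}^{(s,r)}f + \Psi_{2,n}^{(s,r)}\mathbf{1}$, so the entire problem reduces to evaluating $\Psi_{2,n}^{(s,r)}\mathbf{1}$.

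Next I would compute $(\Psi_{2,n}^{(s,r)}\mathbf{1})(e)$ for an arbitrary hyperedge $e \in E(K_n^{(r)})$. By definition this equals $\sum_{e' \in e^{s}} \mathbf{1}(e') = \sum_{e' \in e^{s}} 1_{\F_2}$, and since $e$ has exactly $\binom{r}{s}$ subsets of size $s$, this sum equals $\binom{r}{s} \cdot 1_{\F_2}$, which is $1_{\F_2}$ when $\binom{r}{s}$ is odd and $0_{\F_2}$ when $\binom{r}{s}$ is even. Hence $\Psi_{2,n}^{(s,r)}\mathbf{1} = \mathbf{1}$ (the all-ones coloring on $K_n^{(r)}$) if $\binom{r}{s}$ is odd, and $\Psi_{2,n}^{(s,r)}\mathbf{1} = \mathbf{0}$ if $\binom{r}{s}$ is even. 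Combining with the linearity identity above, in the odd case $\Psi_{2,n}^{(s,r)}\overline{f} = \Psi_{2,n}^{(s,r)}f + \mathbf{1} = \overline{\Psi_{2,n}^{(s,r)}f}$, and in the even case $\Psi_{2,n}^{(s,r)}\overline{f} = \Psi_{2,n}^{(s,r)}f$, which is exactly the claimed dichotomy.

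There is no serious obstacle here; the only point requiring a small amount of care is the bookkeeping that $\overline{h} = h + \mathbf{1}$ holds precisely because $1_{\F_2} + 1_{\F_2} = 0_{\F_2}$ and $0_{\F_2} + 1_{\F_2} = 1_{\F_2}$, matching the defining case split for the complement, and similarly that $\overline{g} = g + \mathbf{1}$ on the target space $\mathcal{H}_n^{(r)}(\F_2)$. I would state these two facts explicitly as a preliminary remark so that the linearity argument goes through cleanly. Everything else is an immediate consequence of the linearity of $\Psi_{2,n}^{(s,r)}$ (already granted to the reader as an exercise) and the elementary count of $s$-subsets of an $r$-set.
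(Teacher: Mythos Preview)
Your proof is correct and, in fact, takes a cleaner route than the paper's own argument. The paper works combinatorially: for a fixed $r$-edge $e$ it considers the induced $s$-uniform subhypergraph on the vertices of $e$, splits its $\binom{r}{s}$ hyperedges into those colored $0_{\F_2}$ and those colored $1_{\F_2}$, and then runs a parity case analysis on $|E(H_0)|$ and $|E(H_1)|$ according to whether $\binom{r}{s}$ is odd or even. You instead exploit the linear-algebraic structure the paper has just set up: writing $\overline{h}=h+\mathbf{1}$ turns the question into computing the single value $\Psi_{2,n}^{(s,r)}\mathbf{1}$, which is the constant coloring $\binom{r}{s}\cdot 1_{\F_2}$. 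This is shorter, avoids the WLOG and case splits, and is more in the spirit of the section's theme of treating $\Psi$ as a linear map; the paper's version has the minor advantage of being self-contained without invoking linearity (which was left to the reader), but your explicit remark that $\overline{h}=h+\mathbf{1}$ on both domain and codomain handles that cleanly.
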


\begin{proof}
Let $f \in \mathcal{H}_n^{(s)}(\F_2)$. Note that each $r$-uniform hyperedge $e\in E(K_n^{(r)})$ corresponds with a selection of $r$ vertices in $K_{n}^{(s)}$. Let $H$ be the subhypergraph induced by $e$ in $K_n^{(s)}$ and observe that $H$ has ${r\choose s}$ $s$-uniform hyperedges. Denote the subhypergraph of $H$ spanned by all $0_{\F_2}$-colored hyperedges in $f$ by $H_0$ and the subhypergraph of $H$ spanned by all $1_{\F_2}$-colored hyperedges in $f$ by $H_1$.  Define the subhypergraphs $\overline{H_0}$ and $\overline{H_1}$ similarly under $\overline{f}$ and note that $\overline{H_0} \cong H_1$ and $\overline{H_1} \cong H_0$. 
By definition, the $r$-uniform hyperedge $e$ will be $1_{\F_2}$-colored in $\Psi _{2,n}^{(s,r)} f$ if $|E(H_1)|$ is odd and $0_{\F_2}$-colored if $|E(H_1)|$ is even. 
In the case where ${r\choose s}$ is odd, exactly 1 of $|E(H_1)|$ and $|E(H_0)|$ must be odd. Without loss of generality, assume $|E(H_1)|$ is odd. This means that $|E(\overline{H_1})|$ is even and while the hyperedge  $e$ is $1_{\F_2}$-colored in $\Psi _{2,n}^{(s,r)} \overline{f}$, $e$ is $0_{\F_2}$-colored in $\Psi _{2,n}^{(s,r)} f$.  It follows that $$\Psi _{2,n}^{(s,r)} \overline{f} = \overline{\Psi _{2,n}^{(s,r)} f}$$ when ${r\choose s}$ is odd.  In the case where ${r\choose s}$ is even, either $|E(H_1)|$ and $|E(H_0)|$ are both odd or they are both even. Likewise, either $|E(H_1)|$ and $|E(\overline{H_1})|$ are both odd or they are both even. This means that the hyperedge $e$ receives the same color in $\Psi _{2,n}^{(s,r)} \overline{f}$ as it receives in $\Psi _{2,n}^{(s,r)} f$, proving that $$\Psi _{2,n}^{(s,r)} \overline{f} = \Psi _{2,n}^{(s,r)} f$$ whenever ${r\choose s}$ is even.
\end{proof}

Next, we consider the sum introduced in Theorem \ref{sumfield} in the case where $q=2$.  To simplify the statement of the next theorem, for $g\in \mathcal{H}_n^{(r)}(\F_2)$, write $$S^{(r)}_{2,n}(g):=\mathop{\sum}\limits_{e\in K_n^{(r)}} g(e).$$ 

\begin{theorem}
If $n\ge r>s\ge 2$ and $g= \Psi_{2,n}^{(s,r)}f$, then $$S^{(r)}_{2,n}(g)=\left\{ \begin{array}{ll} 0_{\F_2} & \mbox{if ${n-s \choose r-s}$ is even} \\ S^{(s)}_{2,n}(f) & \mbox{if ${n-s \choose r-s}$ is odd.}\end{array}\right.$$
\end{theorem}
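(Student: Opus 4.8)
The plan is to expand $S^{(r)}_{2,n}(g)$ using the definition of $\Psi_{2,n}^{(s,r)}$ and count multiplicities, exactly as in the proof of Theorem \ref{sumfield}, but now tracking the parity information that survives when $q=2$ rather than only the case where the binomial coefficient vanishes mod $q$. Writing $g = \Psi_{2,n}^{(s,r)} f$, we have
$$S^{(r)}_{2,n}(g) = \mathop{\sum}_{e\in E(K_n^{(r)})} \mathop{\sum}_{e'\in e^{s}} f(e').$$
As observed in Theorem \ref{sumfield}, each fixed $s$-hyperedge $e'$ is contained in exactly ${n-s \choose r-s}$ of the $r$-hyperedges $e$, so in this double sum the term $f(e')$ appears ${n-s \choose r-s}$ times. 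Reindexing the double sum by $e'$ rather than by $e$ gives
$$S^{(r)}_{2,n}(g) = \mathop{\sum}_{e'\in E(K_n^{(s)})} {n-s \choose r-s}\, f(e') = {n-s \choose r-s}\cdot S^{(s)}_{2,n}(f),$$
where the coefficient is read modulo $2$. When ${n-s \choose r-s}$ is even this coefficient is $0_{\F_2}$, forcing $S^{(r)}_{2,n}(g) = 0_{\F_2}$; when ${n-s \choose r-s}$ is odd the coefficient is $1_{\F_2}$, giving $S^{(r)}_{2,n}(g) = S^{(s)}_{2,n}(f)$. This is precisely the claimed dichotomy.

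The only point requiring care — and the step I would regard as the crux — is justifying the reindexing: interchanging the order of summation and grouping all occurrences of each $f(e')$ is valid because the index set is a finite sum in the vector space $\mathcal{H}_n^{(r)}(\F_2)$, where addition is commutative and associative, and because the count of $r$-hyperedges containing a fixed $s$-hyperedge $e'$ is independent of $e'$ (it depends only on $n$, $s$, $r$, namely on choosing the remaining $r-s$ vertices from the $n-s$ vertices outside $e'$). Once this bookkeeping is in place the rest is immediate. Note also that the first case here strictly extends Theorem \ref{sumfield} at $q=2$, since evenness of ${n-s\choose r-s}$ is exactly the condition $2\mid {n-s\choose r-s}$, while the second case is the genuinely new content available over $\F_2$.
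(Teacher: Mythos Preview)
Your proof is correct and follows essentially the same approach as the paper: expand the double sum, count that each $f(e')$ appears ${n-s\choose r-s}$ times, and split on the parity of that coefficient. The paper cites Theorem~\ref{sumfield} for the even case and redoes the double-sum argument for the odd case, whereas you handle both cases in a single unified computation; the only minor slip is that the sum lives in $\F_2$, not in the vector space $\mathcal{H}_n^{(r)}(\F_2)$, but this does not affect the argument.
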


\begin{proof}
The first case, where ${n-s \choose r-s}$ is even follows from Theorem \ref{sumfield}.  In the case where ${n-s \choose r-s}$ is odd, observe that $$S^{(r)}_{2,n}(g)=\mathop{\sum}\limits_{e\in K_n^{(r)}} \mathop{\sum}\limits_{e'\in e^s} f(e'),$$ with $f(e')$ occurring ${n-s\choose r-s}$ times.  It follows that the sum simplifies to $$\mathop{\sum}\limits_{e'\in E(K_n^{(s)})} f(e')$$ in this case.
\end{proof}

Now we consider a couple of theorems in the case where $s=r-1$.  The following theorem was motivated by Theorem 2.1 of \cite{BR}.

\begin{theorem}
Let $r\ge 3$ be odd and consider a coloring $g=\Psi_{2,n}^{(r-1, r)}f$. If the image of a subhypergraph $K$ of $K_n^{(r-1)}$ under $g$ is complete in some color, then $K$ consists of at most $r-1$ connected components in that color.
\end{theorem}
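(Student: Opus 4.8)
The plan is to argue by contradiction, using the elementary fact that every hyperedge of a hypergraph has all of its vertices inside a single connected component. Write $W = V(K)$, and let $c \in \{0_{\F_2}, 1_{\F_2}\}$ denote the color in which the image is complete, so that $g(e) = c$ for every $r$-element subset $e \subseteq W$. Let $K_c$ be the spanning sub-$(r-1)$-hypergraph of $K$ whose hyperedges are precisely the $(r-1)$-subsets $e'$ of $W$ with $f(e') = c$; this is ``$K$ in color $c$,'' and the claim is that $K_c$ has at most $r-1$ connected components. Suppose instead that $K_c$ has at least $r$ of them.

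Next I would pick a vertex $v_i$ from each of $r$ distinct connected components of $K_c$ and put $e = \{v_1, \dots, v_r\}$, an $r$-element subset of $W$. For each index $i$, the $(r-1)$-subset $e \setminus \{v_i\}$ consists of the vertices $\{v_j : j \neq i\}$, which lie in $r-1 \ge 2$ pairwise distinct components of $K_c$; since no hyperedge of $K_c$ can straddle two components, $e \setminus \{v_i\}$ is not a hyperedge of $K_c$, and hence $f(e \setminus \{v_i\}) = 1_{\F_2} - c$. As $e \setminus \{v_1\}, \dots, e \setminus \{v_r\}$ exhausts $e^{r-1}$, the definition of the lifting yields
$$g(e) = (\Psi_{2,n}^{(r-1,r)} f)(e) = \sum_{i=1}^{r} f(e \setminus \{v_i\}) = r\,(1_{\F_2} - c)$$
in $\F_2$. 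If $c = 1_{\F_2}$ this forces $g(e) = 0_{\F_2} \ne c$, while if $c = 0_{\F_2}$ it forces $g(e) = r \cdot 1_{\F_2} = 1_{\F_2} \ne c$, the last equality being exactly where the hypothesis that $r$ is odd is used. In either case we contradict $g(e) = c$, so $K_c$ can have at most $r-1$ connected components.

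I expect no serious obstacle beyond getting the definitions straight: one must fix that ``connected components in that color'' refers to the components of $K_c$, and record the trivial lemma that a hyperedge lies in a single component, after which the $\F_2$ count is immediate. It is worth remarking that the oddness of $r$ could instead be brought in through Theorem \ref{compthrm}: because ${r \choose r-1} = r$ is odd, one has $\Psi_{2,n}^{(r-1,r)} \overline{f} = \overline{g}$, so passing from $f$ to $\overline{f}$ swaps the two colors and reduces the case $c = 0_{\F_2}$ to the case $c = 1_{\F_2}$ (which itself needs no parity assumption at all). For $r = 3$ this statement specializes to the bound of at most two components established in \cite{BHLS}.
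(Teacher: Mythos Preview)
Your proof is correct and follows the paper's approach: select one vertex from each of $r$ distinct components in color $c$ and compute that the resulting $r$-hyperedge has color $r(1_{\F_2}-c)\ne c$ in $\F_2$. Your write-up is in fact tighter than the paper's, which after this step goes on to construct a second hyperedge before declaring the contradiction, whereas you observe that the single transversal hyperedge already does the job.
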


\begin{proof}
Suppose false, then $K$ consists of at least $r$ components.  Select a single vertex from $r$ of the components and consider the resulting hypergedge. In the case where the components are $1_{\F_2}$-colored, all hyperedges induced by these vertices will be $0_{\F_2}$-colored and the resultant hyperedge will be $0_{\F_2}$-colored. By our assumption that $K$ is monochromatic, all other resultant hyperedges must be colored $0_{\F_2}$ as well. Now consider selecting $r-1$ adjacent vertices from one of the components and a single vertex from one other. These vertices will induce a single edge $1_{\F_2}$-colored $r-1$-uniform hyperedge resulting in a $1_{\F_2}$-colored $r$-uniform hyperedge. This gives us a contradiction. In the case where the components are $0_{\F_2}$-colored, all edges induced by these vertices will be $1_{\F_2}$-colored. Since $r$ is odd, we have an odd number of $1_{\F_2}$-colored $r-1$-uniform hyperedges which will result in a $1_{\F_2}$-colored hyperedge. As before, select $r-1$ vertices joined by a $0_{\F_2}$-colored hyperedge and a single vertex from one other component. These vertices induce $r-1$ $1_{\F_2}$-colored hyperedges which, because $r-1$ is even, result in a $0_{\F_2}$-colored $r$-uniform hyperedge. Once again, this contradicts the assumption that $K$ is monochromatic, proving that $K$ consists of at most $r-1$ connected components.
\end{proof}

The following is a generalization of Theorem 7 of \cite{BHLS}.

\begin{theorem}
If $r\ge 3$ is odd, then $K_m^{(r)}-e$ never occurs as an induced subhypergraph in any hyperedge coloring $\Psi_{2,n}^{(r-1, r)} f$, where $n\ge r+1$.
\end{theorem}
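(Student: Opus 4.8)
The plan is to argue by contradiction, computing a sum of $g$-colors over all $r$-subsets of a carefully chosen $(r+1)$-element vertex set in two different ways -- essentially the parity count underlying Theorem \ref{sumfield}. Suppose, for contradiction, that some $g=\Psi_{2,n}^{(r-1,r)}f$ contains $K_m^{(r)}-e$ as an induced subhypergraph on a vertex set $S$ with $m=|S|\ge r+1$. Interpreting $1_{\F_2}$ as ``hyperedge present,'' this says there is a distinguished $e_0\in S^{r}$ with $g(e_0)=0_{\F_2}$, while $g(e)=1_{\F_2}$ for every other $e\in S^{r}$.

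First I would fix a vertex $y\in S\setminus e_0$ -- available since $|S|\ge r+1>r=|e_0|$ -- and set $W:=e_0\cup\{y\}$, a set of exactly $r+1$ vertices. The $r$-subsets of $W$ are $e_0$ itself, together with the $r$ sets $e_i:=W\setminus\{x_i\}$, one for each vertex $x_i$ of $e_0$; each $e_i$ with $1\le i\le r$ lies in $S^{r}$ and is distinct from $e_0$, so $g(e_i)=1_{\F_2}$, whereas $g(e_0)=0_{\F_2}$ by choice.

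The crux is to evaluate $\mathop{\sum}_{e\in W^{r}}g(e)$ two ways. Summing the known colors directly gives $g(e_0)+\mathop{\sum}_{i=1}^{r}g(e_i)=0_{\F_2}+r\cdot 1_{\F_2}=1_{\F_2}$, since $r$ is odd. On the other hand, expanding via $g(e)=\mathop{\sum}_{e'\in e^{r-1}}f(e')$ and interchanging the order of summation, the coefficient of each $f(e')$ with $e'\in W^{r-1}$ is the number of $r$-subsets of $W$ containing $e'$; since $|W|-|e'|=2$, that number is ${2\choose 1}=2$. Hence $\mathop{\sum}_{e\in W^{r}}g(e)=\mathop{\sum}_{e'\in W^{r-1}}2\,f(e')=0_{\F_2}$. (Equivalently, one may just apply Theorem \ref{sumfield} to the restriction of $g$ to $W$, noting $2\mid{2\choose 1}$.) Comparing the two evaluations yields $1_{\F_2}=0_{\F_2}$, a contradiction, so no such induced copy exists.

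I expect the combinatorial bookkeeping -- that the $e_i$ are precisely the non-$e_0$ $r$-subsets of $W$, and that each $(r-1)$-subset of $W$ lies in exactly two of them -- to be routine; the only place genuine care is needed is in tracking where the hypotheses are consumed. The oddness of $r$ is exactly what sends the direct sum to $1_{\F_2}$ rather than $0_{\F_2}$ (for even $r$ there is no contradiction, which is why the restriction is essential), and the bound $m\ge r+1$ is what guarantees a vertex $y$ outside $e_0$ with which to form $W$. This is the natural $r$-uniform analogue of the two-way-counting argument proving Theorem 7 of \cite{BHLS}.
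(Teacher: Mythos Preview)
Your proof is correct and follows essentially the same approach as the paper's: both pass to an $(r+1)$-vertex set containing the missing hyperedge and derive a parity contradiction from the fact that each $(r-1)$-subset lies in exactly two $r$-subsets, with your packaging via two evaluations of $\sum_{e\in W^{r}}g(e)$ being a slightly cleaner rendering that ties in explicitly with Theorem~\ref{sumfield}. The only minor omission is that the paper explicitly treats both color cases (the $K_{r+1}^{(r)}-e$ monochromatic in $1_{\F_2}$ or in $0_{\F_2}$), whereas you fix $1_{\F_2}$ as ``present''; the other case is symmetric (your direct sum becomes $1_{\F_2}+r\cdot 0_{\F_2}=1_{\F_2}$ there as well), so this is a one-line addition.
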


\begin{proof}
Suppose that $K_m^{(r)}-e$ does occur as an induced subhypergraph in some hyperedge coloring $\Psi_{2,n}^{(r-1, r)} f$.  Then the coloring necessarily contains an induced subhypergraph isomorphic to $K_{r+1}^{(r)}-e$.   First, consider the case where the $K_{r+1}^{(r)}-e$ is in color $1_{\F_2}$ (so that $e$ has color $0_{\F_2}$).  There must necessarily be an even number of $(r-1)$-element subsets of $e$ that are colored $1_{\F_2}$ in $f$ and an odd number that are colored $0_{\F_2}$ (since $r$ is odd). For any other hyperedge in the $K_{r+1}^{(r)}-e$ other than $e$, there are an odd number of $(r-1)$-element subsets that are colored $1_{\F_2}$ and an even number that are colored $0_{\F_2}$.  Summing over all $r$-element subsets, we obtain an odd number of $(r-1)$-elements subsets in color $1_{\F_2}$.  However, note that each $(r-1)$-element subset occurs in exactly $2$ $r$-element subsets.  It follows that this sum should be $0_{\F_2}$, giving a contradiction.  The case where $K_{r+1}^{(r)}-e$ is in color $0_{\F_2}$ is the same, but with $0_{\F_2}$ and $1_{\F_2}$ switched.
\end{proof}

For the remainder of this section, we consider the lifting of graphs to $r$-uniform hypergraphs (the case $s=2$).  Our investigation focuses next on the graphs that lift to complete and empty subhypergraphs (in color $1_{\F_2}$), so to simplify the statements of our theorems, we introduce some new terminology.
Let $G$ be a graph of order $n$ and let $G'$ be the subgraph of $G$ induced by a selection of $r$ vertices such that $2 < r \leq n$.  If $G'$ always has an odd number of edges, $G$ is called 
{\it $r$-complete}. If $G'$ always has an even number of edges, $G$ is called {\it $r$-void}. If $G$ is neither $r$-complete nor $r$-void, it is called {\it $r$-neutral}.
Under $\Psi _{2,n}^{(2,r)}$, a $r$-complete graph in color $1_{\F_2}$ will always lift to a complete $1_{\F_2}$-colored $r$-uniform hypergraph, while an $r$-void graph will always lift to a complete $0_{\F_2}$-colored $r$-uniform hypergraph.

\begin{theorem}
Given an odd $r\ge 3$, a complete bipartite graph of order at least $r$ is always $r$-void.
\label{bipartvoidthrm}\end{theorem}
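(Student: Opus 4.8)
The plan is to reduce the statement to an elementary parity fact about the product of two integers. Write the complete bipartite graph as $G = K_{a,b}$ with vertex bipartition $V(G) = A \cup B$, where $A \cap B = \emptyset$, $|A| = a$, $|B| = b$, and $a + b \geq r$ so that $r$-element subsets of $V(G)$ exist. Fix an arbitrary $r$-element subset $S \subseteq V(G)$ and set $i := |S \cap A|$, so that $|S \cap B| = r - i$.

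The next step is to identify the subgraph $G'$ of $G$ induced by $S$. Since $G$ is complete bipartite, each vertex of $S \cap A$ is adjacent to each vertex of $S \cap B$, and there are no edges within $S \cap A$ or within $S \cap B$; hence $G' \cong K_{i,\, r-i}$, which has exactly $i(r-i)$ edges. Consequently, the number of edges of every induced $r$-vertex subgraph of $G$ is of the form $i(r-i)$ for some integer $i$ with $0 \leq i \leq r$.

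Finally, since $r$ is odd, exactly one of $i$ and $r-i$ is even for each integer $i$, so the product $i(r-i)$ is always even. Thus every $r$-element subset of $V(G)$ induces a subgraph with an even number of edges, which is precisely the definition of $G$ being $r$-void. There is no real obstacle here beyond recognizing that an induced subgraph of a complete bipartite graph is again complete bipartite; the conclusion then follows from the fact that the product of an even integer with an odd integer is even. (The hypothesis that $r$ is odd is essential: when $r$ is even, taking $|S \cap A| = r/2$ gives $i(r-i) = (r/2)^2$, which is odd whenever $r \equiv 2 \pmod 4$.)
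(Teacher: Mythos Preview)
Your proof is correct and follows essentially the same approach as the paper: both arguments count the edges of the induced subgraph as $i(r-i)$ (the paper writes $u(r-u)$) and then use the oddness of $r$ to conclude that one factor is even. The only difference is cosmetic notation and your added remark explaining why the odd-$r$ hypothesis cannot be dropped.
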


\begin{proof}
Let $r\ge 3$ be odd and $n\ge r$ such that $K_{s,t}$ is a complete bipartite graph of order $n$.  The vertex set of $K_{s,t}$ is the disjoint union of partite sets $V_1$ and $V_2$, having cardinalities $|V_1|=s$ and $|V_2|=t$, where $n=s+t$.  Any selection of $r$ vertices from $K_{s,t}$ results in $u$ vertices selected from $V_1$ and $r-u$ vertices selected from $V_2$.  Since $r$ is odd, exactly one of $u$ and $r-u$ must be odd. Without loss of generality, let $u$ be odd and $r-u$ be even. Since $K_{s,t}$ is complete bipartite, each of the $u$ vertices selected from $V_1$ must share an edge with each of the $r-u$ vertices selected from $V_2$. This means that the subgraph induced by these vertices must have $u \cdot (r-u)$ edges. As $r-u$ is even, $u \cdot (r-u)$ must also be even.
\end{proof}

\begin{lemma}
Let $r>u$.
\begin{enumerate}
    \item If $r \equiv 0 \Mod 4$, then ${u\choose 2} + {r-u\choose 2} \equiv u \Mod{2}$.
    \item If $r \equiv 2 \Mod 4$, then ${u\choose 2} + {r-u\choose 2} \equiv (u+1) \Mod{2}$.
\end{enumerate}
\label{lem1}\end{lemma}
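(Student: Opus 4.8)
The plan is to reduce both parts to a single integer identity together with the parity of $\binom{r}{2}$. First I would record the algebraic identity
$$\binom{u}{2}+\binom{r-u}{2}=\binom{r}{2}-u(r-u),$$
valid over $\mathbb{Z}$: expanding the left-hand side as $\tfrac{1}{2}\bigl(u^2+(r-u)^2-r\bigr)$ and using $u^2+(r-u)^2=r^2-2u(r-u)$ gives $\tfrac{1}{2}(r^2-r)-u(r-u)$, which is exactly the right-hand side. Reducing modulo $2$ and using $-1\equiv 1$, this yields
$$\binom{u}{2}+\binom{r-u}{2}\equiv \binom{r}{2}+u(r-u)\pmod{2}.$$

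Next I would exploit that in both cases of the lemma $r$ is even. Then $r-u\equiv -u\equiv u\pmod 2$, so $u(r-u)\equiv u^2\equiv u\pmod 2$, and the congruence above simplifies to
$$\binom{u}{2}+\binom{r-u}{2}\equiv \binom{r}{2}+u\pmod{2}.$$
At this point the lemma has been reduced entirely to computing $\binom{r}{2}\bmod 2$ in the two residue classes of $r$ modulo $4$.

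Finally I would dispatch the two cases. If $r\equiv 0\pmod 4$, write $r=4t$; then $\binom{r}{2}=\tfrac{4t(4t-1)}{2}=2t(4t-1)$ is even, so $\binom{u}{2}+\binom{r-u}{2}\equiv u\pmod 2$, which is part (1). If $r\equiv 2\pmod 4$, write $r=4t+2$; then $\binom{r}{2}=\tfrac{(4t+2)(4t+1)}{2}=(2t+1)(4t+1)$ is a product of two odd integers, hence odd, so $\binom{u}{2}+\binom{r-u}{2}\equiv u+1\pmod 2$, which is part (2).

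I do not anticipate a real obstacle. The only step requiring a bit of care is establishing the integer identity $\binom{u}{2}+\binom{r-u}{2}=\binom{r}{2}-u(r-u)$ correctly before any reduction; an alternative, more computational route is a direct case analysis on $u\bmod 4$ (hence on $(r-u)\bmod 4$, since $r$ is fixed mod $4$) using the elementary fact that $\binom{k}{2}\bmod 2$ depends only on $k\bmod 4$, but the identity above avoids that bookkeeping and makes the role of the hypothesis $r>u$ (ensuring both binomial coefficients are the usual ones) transparent.
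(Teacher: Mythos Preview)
Your proof is correct. The identity $\binom{u}{2}+\binom{r-u}{2}=\binom{r}{2}-u(r-u)$ is verified exactly as you say, and once $r$ is even the reduction $u(r-u)\equiv u\pmod 2$ and the evaluation of $\binom{r}{2}\bmod 2$ via $r=4t$ or $r=4t+2$ finish both parts cleanly.

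Your route differs from the paper's. The paper argues by direct case analysis: within each residue class of $r$ modulo $4$ it splits on the parity of $u$ (equivalently on $u\bmod 4$) and checks whether $\binom{u}{2}$ and $\binom{r-u}{2}$ have the same or opposite parities, using implicitly that the parity of $\binom{k}{2}$ depends only on $k\bmod 4$. This is exactly the ``alternative, more computational route'' you mention at the end. What your identity buys is the elimination of that bookkeeping: the dependence on $u$ collapses to the single term $u(r-u)\equiv u$, and the two cases of the lemma are distinguished solely by the parity of $\binom{r}{2}$. The paper's approach, by contrast, requires no preliminary algebra and is entirely self-contained at the level of parity tables. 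Both are short; yours isolates more clearly \emph{why} the answer flips between the two cases.
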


\begin{proof} We run through cases, based on the value of $r$ modulo $4$.\\
Case 1: Suppose that $r \equiv 0 \Mod 4$.  When $u$ is even (i.e. $u\equiv 0,2 \Mod 4$), then ${u\choose 2}$ and ${r-u\choose 2}$ have the same parity, implying that  ${u\choose 2} + {r-u\choose 2}$ is even.  When $u$ is odd (i.e., $u\equiv 1,3 \Mod 4$), then ${u\choose 2}$ and ${r-u\choose 2}$ have different parities, implying that  ${u\choose 2} + {r-u\choose 2}$ is odd. \\
Case 2: Suppose that $r \equiv 2 \Mod 4$.  When $u$ is even (i.e., $u\equiv 0,2 \Mod 4$), then ${u\choose 2}$ and ${r-u\choose 2}$ have different parities, implying that  ${u\choose 2} + {r-u\choose 2}$ is odd.  When $u$ is odd (i.e., $u\equiv 1,3 \Mod 4$), then ${u\choose 2}$ and ${r-u\choose 2}$ have the same parity, implying that  ${u\choose 2} + {r-u\choose 2}$ is even.
\end{proof}

\begin{theorem}
Let $G$ be the disjoint union of 2 complete subgraphs of orders $s$ and $t$ with $r < s + t$. $G$ is $r$-neutral if $r$ is even, $G$ is $r$-void if $r \equiv 1 \Mod{4}$, and $G$ is $r$-complete if $r \equiv 3 \Mod{4}$. 
\end{theorem}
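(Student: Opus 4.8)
The plan is to reduce the whole statement to one parity computation. Write $G=K_s\dot{\cup} K_t$ with $s,t\ge 1$ and $s+t>r$. Any $r$-element set of vertices consists of $u$ vertices from the $K_s$ part and $r-u$ from the $K_t$ part, for some $u$, and since there are no edges between the two parts the subgraph induced on these vertices has exactly $\binom{u}{2}+\binom{r-u}{2}$ edges. Hence $G$ is $r$-complete (resp.\ $r$-void) exactly when $\binom{u}{2}+\binom{r-u}{2}$ is odd (resp.\ even) for every attainable $u$, and $r$-neutral when it realizes both parities. The attainable values of $u$ are the integers with $\max(0,r-t)\le u\le\min(r,s)$, and a short case check (on whether each of $s$ and $t$ is less than or at least $r$, using $s,t\ge 1$ and $s+t\ge r+1$) shows this range is an interval of length at least $1$, hence contains two consecutive integers — in particular both an even and an odd value of $u$.

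Next I would use the identity
\[ \binom{u}{2}+\binom{r-u}{2}=\binom{r}{2}-u(r-u), \]
so that the edge count is $\equiv \binom{r}{2}+u(r-u)\pmod 2$. If $r$ is even then $r-u\equiv u\pmod 2$, so $u(r-u)\equiv u\pmod 2$; this is exactly Lemma \ref{lem1} (with $\binom{r}{2}$ even when $r\equiv 0$ and odd when $r\equiv 2\pmod 4$), and it shows the parity of the edge count flips with the parity of $u$. Since both parities of $u$ are attained, $G$ is neither $r$-complete nor $r$-void, i.e.\ it is $r$-neutral. If $r$ is odd then exactly one of $u$ and $r-u$ is even, so $u(r-u)$ is always even and the edge count has the constant parity of $\binom{r}{2}$: this is even when $r\equiv 1\pmod 4$, forcing $G$ to be $r$-void, and odd when $r\equiv 3\pmod 4$, forcing $G$ to be $r$-complete.

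The only step requiring genuine care is the bookkeeping in the first paragraph — verifying across the sub-cases that the attainable $u$-range always contains two consecutive integers; everything after that is the parity arithmetic above. One could also bypass the identity entirely and, mirroring the proof of Lemma \ref{lem1}, settle the odd-$r$ cases by a direct case analysis on $u\bmod 4$, using that $\binom{k}{2}$ is odd precisely when $k\equiv 2,3\pmod 4$; this is equally routine.
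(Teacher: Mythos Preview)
Your argument is correct. The even case matches the paper's: both reduce to Lemma \ref{lem1}, though you are more careful in actually verifying that two consecutive values of $u$ are attainable (the paper tacitly assumes one can take both $u$ and $u+1$ from the respective parts).

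For odd $r$ your route is genuinely different. The paper does not compute the parity of $\binom{u}{2}+\binom{r-u}{2}$ at all; instead it observes that $\overline{G}=K_{s,t}$ is $r$-void by Theorem \ref{bipartvoidthrm} and then invokes the complement behavior of the lifting (Theorem \ref{compthrm}) with $s=2$: since $\binom{r}{2}$ is even for $r\equiv 1\pmod 4$ and odd for $r\equiv 3\pmod 4$, the lift of $G$ agrees with (resp.\ is the complement of) the lift of $\overline{G}$, giving $r$-void (resp.\ $r$-complete). Your identity $\binom{u}{2}+\binom{r-u}{2}=\binom{r}{2}-u(r-u)$ collapses both the even and odd cases into a single self-contained parity computation, avoiding any appeal to Theorems \ref{compthrm} and \ref{bipartvoidthrm}. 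The paper's approach, on the other hand, illustrates how the structural theorems about complements feed back into concrete classifications; yours is shorter and independent of that machinery.
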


\begin{proof}
First, consider the case where $r$ is even. We select $u<r$ vertices from the $K_s$ and $r-u$ vertices from the $K_t$. The subgraph induced by these $r$ vertices will contain ${u\choose 2} + {r-u\choose 2}$ edges. Next, select $u+1$ vertices from the $K_s$ and $r-u-1$ vertices from the $K_t$. The subgraph induced by these $r$ vertices will contain ${u+1\choose 2} + {r-u-1\choose 2}$ edges. By Lemma \ref{lem1}, these two selections will yield differing numbers of edges modulo 2. Therefore $G$ must be $r$-neutral. Now consider the case where $r$ is odd. By Theorem \ref{bipartvoidthrm}, $\overline{G}$ is $r$-void, so by Theorem \ref{compthrm}, if $r \equiv 1 \Mod{4}$, $G$ is $r$-void and if $r \equiv 3 \Mod{4}$, $G$ is $r$-complete.
\end{proof}

We conclude this section with a result concerning the original lifting map $\Psi_{2,n}^{(2,3)}
$, denoted by $\varphi$ in \cite{BHLS}.

\begin{theorem}\label{preimage}
The lifting $\Psi_{2,n} ^{(2,3)}$ is a $2^{n-1}$-to-one mapping.
\end{theorem}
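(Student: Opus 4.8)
The statement is really a statement about the linear transformation $\Psi_{2,n}^{(2,3)}$: a linear map is $k$-to-one (on the nose, constant fiber size) precisely when its kernel has $k$ elements, and every nonempty fiber is a coset of the kernel. So the plan is to reduce the theorem to computing $\dim_{\F_2}\ker(\Psi_{2,n}^{(2,3)})$, i.e. to showing that the nullity is exactly $n-1$. Since $\dim \mathcal{H}_n^{(2)}(\F_2) = \binom{n}{2}$ and $\dim \mathcal{H}_n^{(3)}(\F_2) = \binom{n}{3}$, by rank–nullity it is equivalent to show that $\Psi_{2,n}^{(2,3)}$ has rank $\binom{n}{2}-(n-1)$, but it is cleaner to attack the kernel directly.

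First I would set up the correspondence between $\F_2$-colorings $f \in \mathcal{H}_n^{(2)}(\F_2)$ and graphs $G_f$ on $[n]$ (the $1_{\F_2}$-colored edges), under which $\Psi_{2,n}^{(2,3)}f = 0$ means that every triple of vertices spans an even number of edges of $G_f$, i.e. $G_f$ is what the paper calls an $n$-void graph. So the theorem amounts to: the $n$-void graphs on $n$ labelled vertices form an $\F_2$-subspace of dimension $n-1$. The key step is to identify this subspace explicitly. The natural candidate is the span of the "vertex stars'': for each vertex $v$, let $S_v$ be the coloring whose $1$-edges are exactly those incident to $v$ (equivalently, $G_{S_v} = K_{1,n-1}$ centered at $v$). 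Each $S_v$ lies in the kernel because a complete bipartite graph of even-partite type — here any induced subgraph on $3$ vertices has $0$ or $2$ edges — is $3$-void (this is the $r=3$ instance of Theorem \ref{bipartvoidthrm}). More generally, symmetric-difference combinations of stars are exactly the "cut-like'' colorings, and I would show these are precisely the $3$-void graphs.

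The two things to prove are: (i) the stars $S_{v_1},\dots,S_{v_{n-1}}$ (dropping one vertex) are linearly independent over $\F_2$ — this is a short rank computation, since $\sum_{v\in A} S_v$ has, for $|A|=a$, exactly $a(n-a)$ one-edges, which is nonzero for $\emptyset \neq A \subsetneq [n]$, and the full sum $\sum_{v\in[n]} S_v = 0$ is the only relation; and (ii) every $f$ in the kernel is such a combination. For (ii), given $f$ with $\Psi f = 0$, pick a vertex $v$; the condition "every triple through $v$ and two neighbors/non-neighbors of $v$ is even'' forces the edge relation among $V\setminus\{v\}$ to be completely determined by the neighborhood of $v$ — concretely, for $x,y \neq v$, the triple $vxy$ having an even number of edges means $f(xy) = f(vx)+f(vy)$, so $f$ on all of $K_{V\setminus v}$ is the "coboundary'' of the function $x \mapsto f(vx)$, which says exactly that $f = \sum_{x : f(vx)=1} S_x$ modulo possibly adding $S_v$. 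Checking that this genuinely reproduces $f$ on edges incident to $v$ as well (and sorting out the one bit of freedom $S_v$) pins the kernel down to dimension $n-1$. Then $|\ker| = 2^{n-1}$, the map is constant-fiber of that size on its image, and the theorem follows.

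**Main obstacle.** The only real content is step (ii): showing the kernel is not merely $\supseteq$ the star-span but equal to it. The identity $f(xy) = f(vx) + f(vy)$ extracted from the triples through a fixed $v$ does all the work, and the one subtlety to handle carefully is the parity bookkeeping that reconciles the coboundary description with the star description (the ambiguity by $S_v$, and the single relation $\sum_v S_v = 0$), so that the count comes out to exactly $n-1$ and not $n$ or $n-2$.
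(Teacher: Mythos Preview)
Your proposal is correct and takes a genuinely different route from the paper. The paper also begins by invoking linearity to reduce to counting the fiber over the zero coloring, but then it quotes the external characterization (Theorem~4 of \cite{BHLS}) that the preimages of the all-$0_{\F_2}$ coloring are exactly those $f$ whose $0_{\F_2}$-colored subgraph is $K_n$ or a disjoint union $K_a\dot\cup K_{n-a}$, and finishes by a direct binomial count $\sum_{a\le n/2}\binom{n}{a}$ (with the usual symmetry trick) to get $2^{n-1}$. Your argument instead computes the kernel intrinsically: the identity $f(xy)=f(vx)+f(vy)$ from each triple through a fixed $v$ shows every kernel element is a sum of stars, and together with the single relation $\sum_v S_v=0$ this pins the nullity to $n-1$. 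The two approaches are counting the same objects (your star-sums $\sum_{v\in A}S_v$ are exactly the colorings whose $1_{\F_2}$-graph is $K_{A,\bar A}$, i.e.\ the complements of the paper's $K_a\dot\cup K_{n-a}$), but yours is self-contained, avoids the appeal to \cite{BHLS}, and fits the linear-algebra framing of Section~\ref{lineartrans} more tightly; the paper's version has the advantage of being shorter once the cited theorem is in hand. One small cleanup: in fact no $S_v$-correction is needed---with $B=\{x\ne v:f(vx)=1\}$ one checks $f=\sum_{x\in B}S_x$ on the nose, including on edges through $v$, so the ``one bit of freedom'' you worry about does not actually arise.
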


\begin{proof}
The fact that every $g\in Im (\Psi_{2,n} ^{(2,3)} )$ has the same number $k$ of preimages follows from the linearity of $\Psi_{2,n} ^{(2,3)}$.  In order to determine the value of $k$, we select an element in the image whose preimages we can easily count.  Specifically, consider the element in $\mathcal{H}^{(3)}_{n}(\F_2)$ that maps all hyperedges to $0_{\F_2}$.  The induced subhypergraph in color $0_{\F_2}$ is isomorphic to $K_n^{(3)}$, and by Theorem 4 of \cite{BHLS}, the elements in $\mathcal{H}^{(2)}_n(\F_2)$ that map to this element are those in which color $0_{\F_2}$ is given to subgraphs that are complete of order $n$ or are the disjoint union of two complete subgraphs, whose orders add to $n$.  When $n$ is even, the possibilities are $$K_n, \ K_1 \dot{\cup} K_{n-1}, \ K_2 \dot{\cup} K_{n-2} , \ \dots , \ K_{n/2-1} \dot{\cup} K_{n/2+1}, \ K_{n/2} \dot{\cup} K_{n/2},$$ with these cases occurring $${n\choose 0}, \ {n\choose 1} , \ {n\choose 2}, \ \dots , {n\choose n/2-1}, \ \frac{1}{2}{n\choose n/2}$$ times, respectively.  When $n$ is odd, the possibilities are $$K_n, \ K_1 \dot{\cup} K_{n-1}, \ K_2 \dot{\cup} K_{n-2} , \ \dots , \ K_{(n-1)/2} \dot{\cup} K_{(n+1)/2},$$ with these cases occurring $${n\choose 0}, \ {n\choose 1} , \ {n\choose 2}, \ \dots , \ {n\choose (n-1)/2}$$ times, respectively.  Applying the identity $${n\choose 0} + {n\choose 1}+ \cdots + {n\choose n} =2^n, $$ along with the property ${ n\choose k}={n\choose n-k}$ to each of these cases, we obtain the statement of the theorem.
\end{proof}

\section{Some Applications to Ramsey Theory}\label{Ramtheory}

The generalization of the lifting map as a linear transformation allows us to prove a $3$-colored Ramsey number bound that does not require the restriction to Gallai colorings, as in \cite{BHP}.  Recall that if $H_1, H_2, \dots , H_t$ are $r$-uniform hypergraphs, then the Ramsey number $R(H_1, H_2, \dots , H_t;r)$ is the least positive integer $p$ such that every $t$-coloring of the hyperedges of $K_p^{(r)}$ results in a subhypergraph isomorphic to $H_i$ spanned by hyperedges in color $i$, for some $1\le i\le t$. 

\begin{theorem}\label{ram} Let $s_i\ge 3$ for all $1\le i\le 3$.
$$R(K_{2s_1-1}^{(3)}-e, K_{2s_2-1}^{(3)},  K_{2s_3-1}^{(3)};3)\ge R(K_{s_1}, K_{s_2},  K_{s_3};2).$$
\end{theorem}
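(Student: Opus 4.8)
The plan is to take a critical $2$-coloring realizing the graph Ramsey number, feed its three color classes into two copies of the lifting $\Psi_{2,N}^{(2,3)}$, and let the third hyperedge color absorb whatever is left over. Concretely, set $N:=R(K_{s_1},K_{s_2},K_{s_3};2)-1$ and fix a $3$-edge-coloring $c$ of $K_N$ with no monochromatic $K_{s_i}$ in color $i$ for each $i$. Define $f_1,f_2\in\mathcal{H}_N^{(2)}(\F_2)$ by $f_j(uv)=1_{\F_2}$ if $c(uv)=j$ and $f_j(uv)=0_{\F_2}$ otherwise, put $g_j:=\Psi_{2,N}^{(2,3)}f_j$, and define a $3$-hyperedge-coloring $C$ of $K_N^{(3)}$ by: $C(t)=1$ if $g_1(t)=1_{\F_2}$; otherwise $C(t)=2$ if $g_2(t)=1_{\F_2}$; otherwise $C(t)=3$. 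I would then show that $C$ has no color-$1$ copy of $K_{2s_1-1}^{(3)}-e$, no color-$2$ copy of $K_{2s_2-1}^{(3)}$, and no color-$3$ copy of $K_{2s_3-1}^{(3)}$, which forces $R(K_{2s_1-1}^{(3)}-e,K_{2s_2-1}^{(3)},K_{2s_3-1}^{(3)};3)>N$.

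For color $1$: a color-$1$ copy of $K_{2s_1-1}^{(3)}-e$ occupies a set $S$ of $2s_1-1$ vertices at most one of whose triples fails to be color $1$. Since $C(t)=1$ exactly when $g_1(t)=1_{\F_2}$, this means $g_1$ equals $1_{\F_2}$ on every triple of $S$ or on all but exactly one of them. In the second case $g_1$ contains $K_{2s_1-1}^{(3)}-e$ as an induced subhypergraph, which is impossible by the generalization of Theorem 7 of \cite{BHLS} established above, applied with $r=3$ (here $2s_1-1\ge 5\ge 4$ since $s_1\ge 3$). In the first case, Theorem 4 of \cite{BHLS} forces the color-$1$ subgraph of $c$ on $S$ to be a disjoint union of at most two complete graphs, hence to contain a clique on $\lceil (2s_1-1)/2\rceil = s_1$ vertices, contradicting the choice of $c$. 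The color-$2$ claim is easier: a color-$2$ copy of $K_{2s_2-1}^{(3)}$ occupies a set $S'$ \emph{all} of whose triples are color $2$, so $g_2$ is $1_{\F_2}$ on every triple of $S'$, and Theorem 4 of \cite{BHLS} again yields a monochromatic $K_{s_2}$ in color $2$.

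The main obstacle is color $3$, since its hyperedges are precisely those with $g_1(t)=g_2(t)=0_{\F_2}$, so this double constraint must be decoded into graph structure. A color-$3$ copy of $K_{2s_3-1}^{(3)}$ occupies a set $S''$ with $g_1\equiv g_2\equiv 0_{\F_2}$ on its triples. Because $\binom{3}{2}=3$ is odd, Theorem \ref{compthrm} gives $\Psi_{2,N}^{(2,3)}\overline{f_j}=\overline{g_j}$, so each $\overline{g_j}$ restricted to $S''$ is complete; applying Theorem 4 of \cite{BHLS} to $\overline{f_1}$ and to $\overline{f_2}$ shows that the color-$1$ subgraph and the color-$2$ subgraph of $c$ on $S''$ are each either edgeless or a spanning complete bipartite graph. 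The crucial elementary step I would supply here is that two spanning complete bipartite subgraphs of a complete graph, both having at least one edge, always share an edge: picking $a,b$ on opposite sides of the first, edge-disjointness would put $a,b$ on a common side of the second, and then for any vertex $d$ on the other side of the second, the edges $ad$ and $bd$ lie in the second, forcing $d$ onto the same side of the first as both $a$ and $b$ — impossible. Since the color-$1$ and color-$2$ subgraphs of $c$ are edge-disjoint, at most one of them is non-edgeless, so the color-$3$ subgraph of $c$ on $S''$ is the complement of at most one complete bipartite graph, that is, a disjoint union of at most two complete graphs, and hence contains a clique on $\lceil (2s_3-1)/2\rceil = s_3$ vertices in color $3$ — again contradicting the choice of $c$. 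Putting the three cases together gives $R(K_{2s_1-1}^{(3)}-e,K_{2s_2-1}^{(3)},K_{2s_3-1}^{(3)};3)\ge N+1=R(K_{s_1},K_{s_2},K_{s_3};2)$.
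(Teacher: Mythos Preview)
Your argument is correct, and it proceeds along a genuinely different route than the paper's. The paper identifies the three graph colors with the elements of $\F_3$ and invokes the Gallai--Ramsey lifting of \cite{BHP} (Theorem~3 there), observing that the only new case beyond \cite{BHP} is the rainbow triangle, which the $\F_3$-sum sends to a single determined color. You instead stay entirely over $\F_2$: you run two separate $\F_2$-liftings $g_1=\Psi_{2,N}^{(2,3)}f_1$ and $g_2=\Psi_{2,N}^{(2,3)}f_2$, let color~$3$ be the residual set $\{g_1=g_2=0\}$, and then decode that residual via Theorem~\ref{compthrm} and Theorem~4 of \cite{BHLS}, together with the pleasant little observation that two edge-disjoint spanning complete bipartite graphs cannot both be nonempty. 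What this buys you is a self-contained proof using only \cite{BHLS} and the $\F_2$ material of Section~\ref{twofield}, with every step made explicit; the paper's version is shorter but leans on the external construction in \cite{BHP} and is correspondingly sketchier about the color-by-color verification. Both approaches exploit the same two structural facts from \cite{BHLS} (Theorems~4 and~7) to rule out the forbidden monochromatic configurations.
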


\begin{proof} We identify the three colors with the elements in $\F_3$.
This proof follows the proof of the analogous Gallai-Ramsey number result in Theorem 3 of \cite{BHP}.  For all $3$-tuples of vertices in $K_n$ whose induced subgraphs are monochromatic or $2$-colored, the image is the expected image in the Gallai-Ramsey case.  For rainbow $3$-tuples, a single color results, corresponding to the $2$-color lifting where the other two colors are identified as being the same. By Theorem 4 of \cite{BHLS}, avoiding a monochromatic $K_{s_i}$ in color $i$ results in an image that avoids a monochromatic copy of $K_{2s_i-1}^{(3)}$ in color $i$.  For color $0_{\F_3}$ in particular, the lifting is the same as in the $2$-color case.  This allows us to further note that no $0_{\F_3}$-colored copy of $K_{2s_1-1}^{(3)}-e$ exists by Theorem 7 of \cite{BHLS}.  
\end{proof}

The following theorem generalizes Theorem 10 of \cite{BHLS} to a $5$-color Ramsey-theoretic result.

\begin{theorem} \label{ram2}
Let $q\ge 3$ and $s_i\ge 3$ for all $1\le i\le 3$. Then $$R(K_{2s_1-1}^{(3)}-e, K_{2s_2-1}^{(3)}, K_{2s_3-1}^{(3)}, K_5^{(3)}, K_{q+1}^{(3)}-e;3)>q(R(K_{s_1}, K_{s_2}, K_{s_3};2)-1).$$
\end{theorem}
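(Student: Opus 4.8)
The plan is to use a blow-up (vertex-substitution) construction layered on top of Theorem~\ref{ram}. Set $N:=R(K_{s_1},K_{s_2},K_{s_3};2)-1$. By Theorem~\ref{ram} we have $R(K_{2s_1-1}^{(3)}-e,K_{2s_2-1}^{(3)},K_{2s_3-1}^{(3)};3)\ge N+1$, so there is a coloring $c: E(K_N^{(3)})\longrightarrow\{1,2,3\}$ with no monochromatic $K_{2s_i-1}^{(3)}$ in color $i$ for each $i$ and with no copy of $K_{2s_1-1}^{(3)}-e$ in color $1$. First I would take $qN$ vertices, partition them into groups $V_1,\dots,V_N$ of size $q$, and build a $5$-coloring of $E(K_{qN}^{(3)})$: a hyperedge that meets three distinct groups $V_i,V_j,V_k$ gets color $c(\{i,j,k\})$; a hyperedge that meets exactly two of the groups gets color $4$; a hyperedge lying inside a single group gets color $5$.

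Next I would check the five avoidance conditions. If $S$ spans a monochromatic $K_{2s_i-1}^{(3)}$ in color $i\in\{1,2,3\}$, then $S$ contains no two vertices from a common group, since such a pair would lie in a hyperedge of color $4$ or $5$; hence $S$ projects to a set of $|S|$ groups inducing a monochromatic $K_{|S|}^{(3)}$ in color $i$ under $c$, forcing $|S|<2s_i-1$. For color $1$ one additionally rules out $K_{2s_1-1}^{(3)}-e$: since $s_1\ge 3$ gives $2s_1-1\ge 5$, a set $S$ of $2s_1-1$ vertices with two vertices in a common group would contain at least $|S|-2\ge 3$ hyperedges of color $4$ or $5$, more than the single hyperedge missing from $K_{2s_1-1}^{(3)}-e$; so $S$ again projects down and induces $K_{2s_1-1}^{(3)}-e$ (or $K_{2s_1-1}^{(3)}$) in color $1$ under $c$, contradicting the choice of $c$ (equivalently, Theorem~7 of \cite{BHLS}). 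A monochromatic clique in color $4$ meets at most two groups and at most two vertices of each, hence has at most four vertices and cannot contain $K_5^{(3)}$. Finally, invoking $q\ge 3$: a set of $q+1$ vertices inducing $K_{q+1}^{(3)}-e$ in color $5$ cannot contain two vertices from distinct groups, since that pair would meet $|S|-2=q-1\ge 2$ hyperedges outside color $5$; so the set lies in a single group, which has only $q<q+1$ vertices. Thus $K_{qN}^{(3)}$ receives a $5$-coloring avoiding all five forbidden hypergraphs, and since $qN=q(R(K_{s_1},K_{s_2},K_{s_3};2)-1)$ the claimed strict inequality follows.

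I expect the main obstacle to be the two ``$-e$'' cases. One must carry out the single-missing-hyperedge bookkeeping carefully and confirm that the hypotheses $q\ge 3$ and $s_1\ge 3$ are exactly what guarantees $q-1\ge 2$ and $2s_1-1\ge 5$, so that a stray pair of vertices sharing a group produces strictly more than one non-conforming hyperedge; one also needs to import the $K^{(3)}-e$-freeness for color $1$ correctly from the underlying results. The clique-size bounds for colors $2$, $3$, and $4$ are routine counting and should cause no trouble.
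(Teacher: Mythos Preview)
Your proof is correct, but the construction is the ``dual'' of the paper's. Writing $p=R(K_{s_1},K_{s_2},K_{s_3};2)$, the paper takes $q$ groups $V_1,\dots,V_q$, each of size $p-1$, places the $3$-coloring from Theorem~\ref{ram} \emph{inside} each group, uses color~$4$ for triples of type $2{+}1$ across groups, and uses color~$5$ for triples meeting three distinct groups; the $K_{q+1}^{(3)}-e$ bound then comes from there being only $q$ groups. You instead take $N=p-1$ groups, each of size $q$, put the $3$-coloring on the \emph{cross} triples via projection, use color~$4$ the same way, and assign color~$5$ to triples lying within a single group; your $K_{q+1}^{(3)}-e$ bound comes from the groups having size $q$. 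Both blow-ups produce $q(p-1)$ vertices and the verifications are symmetric. Your version has the mild advantage that a set $S$ with vertices in distinct groups projects isomorphically onto a copy of $K_{|S|}^{(3)}$ colored exactly by $c$, so the $K_{2s_1-1}^{(3)}-e$ avoidance in color~$1$ transfers directly from Theorem~\ref{ram} without further argument; the paper's version is perhaps more visually immediate as ``$q$ copies of the good coloring.'' Either route establishes the theorem.
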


\begin{proof}
If $p=R(K_{s_1}, K_{s_2}, K_{s_3};2)$, then by Theorem \ref{ram}, we can construct a $3$-colored $K_{p-1}^{(3)}$ that avoids a copy of $K_{2s_1-1}^{(3)}-e$ in color $0_{\F_3}$, a copy of $K_{2s_2-1}^{(3)}$ in color $1_{\F_3}$, and a copy of $K_{2s_3-1}^{(3)}$ in color $2_{\F_3}$.  Consider the disjoint union of $q$ copies of this $3$-colored $K_{p-1}^{(3)}$, and label the copies $V_1, V_2, \dots ,V_q$.  Color the hyperedges that have one vertex in some $V_i$ and the other two vertices in some $V_j$ ($i\ne j$) using a fourth color and color the hyperedges that have all vertices coming from distinct $V_i$ using a fifth color.  It is easily confirmed that the largest complete hypergraph in the fifth color uses at most one vertex from any given $V_i$ and adding in another vertex to such a hypergraph is lacking more than one hyperedge in the fifth color (hence, avoiding a $K_{q+1}^{(3)}-e$). The largest complete hypergraph in the fourth color uses at most two vertices from at most two distinct $V_i$.  It follows that no $K_5^{(3)}$  exists in the fourth color and no $K_{q+1}^{(3)}-e$ exists in the fifth color.   
\end{proof}

Using the $3$-color lower bounds given in Table X of Section 6.1 of Radziszowski's dynamic survey \cite{Rad}, Theorem \ref{ram2} implies the following lower bounds.  Here, we use the usual notation $R_3(G)$ to denote the $3$-color graph Ramsey number $R(G,G,G;2)$.

\begin{align}
    R_3(K_3)=17 \quad &\Longrightarrow \quad R(K_{5}^{(3)}-e, K_5^{(3)}, K_5^{(3)}, K_5^{(3)}, K_{q+1}^{(3)}-e;3)>16q, \notag \\
    R_3(K_4)\ge 128 \quad &\Longrightarrow \quad R(K_7^{(3)}-e, K_7^{(3)}, K_7^{(3)}, K_5^{(3)}, K_{q+1}^{(3)}-e;3)>127q,\notag \\
    R_3(K_5)\ge 417 \quad &\Longrightarrow \quad R(K_9^{(3)}-e, K_9^{(3)}, K_9^{(3)}, K_5^{(3)}, K_{q+1}^{(3)}-e;3)>416q, \notag \\
    R_3(K_6)\ge 1070 \quad &\Longrightarrow \quad R(K_{11}^{(3)}-e, K_{11}^{(3)}, K_{11}^{(3)}, K_5^{(3)}, K_{q+1}^{(3)}-e;3)>1069q, \notag \\
    R_3(K_7)\ge 3214 \quad &\Longrightarrow \quad R(K_{13}^{(3)}-e, K_{13}^{(3)}, K_{13}^{(3)}, K_5^{(3)}, K_{q+1}^{(3)}-e;3)>3213q, \notag \\
    R_3(K_8)\ge 6079 \quad &\Longrightarrow \quad R(K_{15}^{(3)}-e, K_{15}^{(3)}, K_{15}^{(3)}, K_5^{(3)}, K_{q+1}^{(3)}-e;3)>6078q, \notag \\
    R_3(K_9)\ge 13761 \quad &\Longrightarrow \quad R(K_{17}^{(3)}-e, K_{17}^{(3)}, K_{17}^{(3)}, K_5^{(3)}, K_{q+1}^{(3)}-e;3)>13760q. \notag 
\end{align}

\bibliographystyle{amsplain}

\begin{thebibliography}{10}

\bibitem{BHLS} M. Budden, J. Hiller, J. Lambert, and C. Sanford, {\it The Lifting of Graphs to $3$-Uniform Hypergraphs and Some Applications to Hypergraph Ramsey Theory}, Involve {\bf 10} (2017), 65-76.

\bibitem{BHP} M. Budden, J. Hiller, and A. Penland, {\it Constructive Methods in Gallai-Ramsey Theory for Hypergraphs,} Integers {\bf 20A} (2020), \#A4.

\bibitem{BR} M. Budden and A. Rapp, {\it Constructing $r$-Uniform Hypergraphs with Restricted Clique Numbers,} The North Carolina Journal of Mathematics and Statistics {\bf 1} (2015), 30-34.

\bibitem{Rad} S. Radziszowski, {\it Small Ramsey Numbers - Revision 15} Electron. J. Combin.  {\bf DS1.15}  (2017), 1-104.

\end{thebibliography}

\end{document}